\documentclass{amsart}

\usepackage{amsmath,amssymb,amsthm}

\usepackage{pinlabel}
\usepackage{enumerate}

\newtheorem{prop}{Proposition}
\newtheorem{thm}[prop]{Theorem}

\newtheorem{add}[prop]{Addendum}

\theoremstyle{definition}

\newtheorem*{ack}{Acknowledgements}

\newcommand{\co}{\colon\thinspace}

\newcommand{\wtA}{\widetilde{A}}

\newcommand{\D}{\mathbb{D}}

\newcommand{\rmd}{\mathrm{d}}

\newcommand{\N}{\mathbb{N}}

\newcommand{\Q}{\mathbb{Q}}

\newcommand{\R}{\mathbb{R}}

\newcommand{\SL}{\mathrm{SL}}

\newcommand{\Tmin}{T_{\mathrm{min}}}

\newcommand{\Z}{\mathbb{Z}}

\DeclareMathOperator{\area}{area}
\DeclareMathOperator{\vol}{vol}

\begin{document}

\author[H.~Geiges]{Hansj\"org Geiges}
\address{Mathematisches Institut, Universit\"at zu K\"oln,
Weyertal 86--90, 50931 K\"oln, Germany}
\email{geiges@math.uni-koeln.de}
\author[J.~Hedicke]{Jakob Hedicke}
\address{Afdeling Wiskunde, Radboud Universiteit, Heyendaalseweg 135,
6525AJ Nijmegen, Netherlands}
\email{jakob.hedicke@gmail.com}
\author[M.~Sa\u{g}lam]{Murat Sa\u{g}lam}
\address{Mathematisches Institut, Universit\"at zu K\"oln,
Weyertal 86--90, 50931 K\"oln, Germany}
\email{msaglam@math.uni-koeln.de}

\title[Integrable contact forms and systolic ratio]{Bott-integrable
contact forms with large systolic ratio}

\date{}

\begin{abstract}
We show that there is no universal upper bound for the systolic
ratio of Bott-integrable contact forms on closed $3$-manifolds,
thus providing further evidence for the relative flexibility
of integrable contact forms. For the proof, we study piecewise
linear approximations of Lutz forms and establish integrability
of a `plug' constructed by Abbondandolo, Bramham, Hryniewicz
and Salom\~ao for pushing up the systolic ratio.
\end{abstract}

\subjclass[2020]{37J35, 37J55, 53C23, 53D35}

\keywords{Reeb dynamics, Bott integrability, systolic ratio, contact structure}

\maketitle


\section{Introduction}
The \emph{systole} of a closed Riemannian or Finsler manifold is the
length of the shortest non-contractible
closed geodesic. The \emph{systolic ratio}
is the scale-invariant dimensionless quotient
between the $n^{\mathrm{th}}$ power of the systole and the volume of the
$n$-dimensional manifold. The systolic ratio has a long history,
especially in the Riemannian geometry of surfaces, and under
a variety of assumptions upper bounds are known; see \cite{abhs19}
for an overview, also with a view towards Finsler geometry.

The (co-)geodesic flow can be interpreted as the Reeb flow of the
Liouville contact form on the unit cotangent bundle;
see \cite{geig-rsc} for the Riemannian and \cite{dgz17} for the
Finsler case. Based on this fact, in \cite{apba14} it was first
observed that contact geometry is a natural setting to study
questions in systolic geometry; see also \cite{abhs18,abhs19}.

Given a closed manifold $M$ of dimension $2n+1$ admitting a contact
form~$\alpha$, we denote by $R=R_{\alpha}$ the Reeb vector field of~$\alpha$,
and by $\vol(M,\alpha)$ the volume of $M$ with respect to the
volume form $\alpha\wedge(\rmd\alpha)^n$. If there are
any periodic Reeb orbits---which is always the case in dimension
three~\cite{taub07} and conjecturally in all (odd) dimensions---the
infimum over the periods of all closed Reeb orbits is finite,
and actually a minimum by an argument as in \cite[p.~109]{hoze94}.
We denote this (positive) minimum by $\Tmin(M,\alpha)$. The scale-invariant
systolic ratio is then defined as
\[ \rho(M,\alpha):=\frac{\Tmin(M,\alpha)^{n+1}}{\vol(M,\alpha)}.\]

In general, there is no upper bound on this systolic ratio, that is,
given a contact manifold $(M,\xi)$, one can find contact forms
$\alpha$, defining $\xi=\ker\alpha$, with arbitrarily large
systolic ratio; see~\cite{abhs18,abhs19,sagl21,sagl24}.
However, for many classes of Reeb flows in dimension three---to
which we now restrict our attention---the imposition
of additional symmetries leads to such bounds. Examples
are Reeb flows of contact forms that are
$C^2$-close to a Zoll contact form~\cite{abbe23,abe25},
or Reeb flows of $S^1$-invariant contact forms
on Seifert bundles with non-vanishing Euler number~\cite{vial25,vial-tams}.

A symmetry assumption weaker than $S^1$-invariance, but still believed to
be quite strong, is Bott integrability of the contact form
as defined in~\cite{ghs24}.
The main result of the present paper says that there is no universal
systolic inequality on the class of Bott-integrable contact forms
on any given closed contact $3$-manifold. This shows that integrable
contact forms abound within the class of contact forms defining
a given integrable contact structure, and it provides further evidence that
the integrability requirement may be less restrictive than
expected. This theorem complements
the results about overtwisted Bott-integrable contact
structures in \cite{ghs-ot}, which can be read as
statements about the abundance of integrable contact structures
within the space of all contact structures.

Recall from \cite{ghs24,ghs-ot} that a contact form $\alpha$ on a
compact $3$-manifold $M$ is called \emph{Bott integrable}
if there is a Morse--Bott function $f\co M\rightarrow\R$ invariant
under the Reeb flow of~$\alpha$. A contact structure $\xi$ on $M$
said to be \emph{Bott integrable} if there is some Bott-integrable
contact form $\alpha$ with $\xi=\ker\alpha$.

\begin{thm}
\label{thm:main}
Let $\xi$ be a Bott-integrable contact structure on
a closed $3$-mani\-fold~$M$. Then for every $C\in\R^+$ there
exists a Bott-integrable contact form $\alpha$ defining
$\xi=\ker\alpha$ with systolic ratio $\rho(M,\alpha)>C$.
\end{thm}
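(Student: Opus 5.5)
Starting from a Bott-integrable contact form $\alpha_0$ with $\ker\alpha_0=\xi$, the plan is to modify it locally, inside a region where it has a simple normal form, by inserting the plug of Abbondandolo, Bramham, Hryniewicz and Salom\~ao. We then check that the plug can be chosen integrable and glued in compatibly with a Morse--Bott first integral. The structure theory from \cite{ghs24} says that the Morse--Bott integral $f$ decomposes $M$ into pieces: thickened tori $T^2\times I$ foliated by invariant tori on which the Reeb flow is linear, together with neighbourhoods of critical circles and critical tori or Klein bottles. On a regular region $T^2\times[a,b]$ with coordinates $(r,\theta_1,\theta_2)$, the form can be brought into Lutz form $\alpha=h_1(r)\,\rmd\theta_1+h_2(r)\,\rmd\theta_2$. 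Here $(h_1,h_2)$ is a curve winding positively about the origin, and any such $T^2$-invariant form is integrable with integral $r$. The first step is therefore to isolate such a region $N$ and to normalise $\alpha_0$ there by a $T^2$-equivariant isotopy. By Gray stability this keeps $\xi$ fixed up to diffeomorphism isotopic to the identity, so the result is still a form defining $\xi$.

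Second, replace the smooth Lutz curve on a subinterval by a piecewise linear approximation, and then by a smoothing that has long straight segments. Along a straight segment the normals $(h_2',-h_1')$ are constant, so the Reeb field has a fixed direction on all tori in that range. Choosing this direction irrational-slope-free but with large-period orbits, or ensuring that the closed orbits there have period as large as we like, gives a thick shell $T^2\times J$ in which the Reeb flow is linear and every closed orbit has period $\ge \Tmin(M,\alpha_0)$. The volume of this shell is controlled by the area $\int h_1\,\rmd h_2 - h_2\,\rmd h_1$ swept by the curve, so it can be kept bounded. Outside the shell nothing changes, and the volume change elsewhere is small.

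Third, insert the ABHS plug into this shell. The plug is a modification supported in a solid region transverse to a flow with no short orbits. It decreases the volume by an arbitrarily large factor while creating no new short closed orbits; in \cite{abhs18} this is done by composing the return map of a disc-like global section with a map of large negative Calabi invariant. The key claim, and the main obstacle, is that the plug can be built $T^2$-invariantly, or at least $S^1$-invariantly, in a way that admits a Morse--Bott integral. My approach would be to realise the plug inside $T^2\times J$ as another Lutz-type form: we want a curve $(h_1,h_2)$ whose swept area is tiny, so the volume becomes small, while every rational slope direction it passes through yields closed orbits of period bounded below by a fixed $T>0$.

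The periods on a torus of rational slope $(p,q)$ equal $|p h_1+q h_2|$ times a constant. So the task is a planar curve-design problem. We need a curve that winds through the required angle, has small swept area, and stays far from every line $ph_1+qh_2=0$ with small $|p|+|q|$ wherever its normal has slope $(p,q)$. A piecewise linear curve whose straight pieces have normals of high-complexity rational or irrational slope, joined at corners smoothed over tiny angle intervals, should achieve this. Each corner costs only slopes of huge $|p|+|q|$ and hence long periods. The integral $r$ stays Morse--Bott, since $r$ has no critical points in the interior, so integrability is preserved. Finally, set $\rho=\Tmin^2/\vol$. $\Tmin$ is bounded below, by the minimum of the old short orbits outside $N$ and the new bound $T$, while the volume can be made arbitrarily small by shrinking the swept area inside $N$. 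Hence $\rho>C$. The estimate on corner periods versus swept area is the step I expect to require the most care.
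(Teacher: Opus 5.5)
\textbf{There is a genuine gap, and it is exactly at the step you flag as the main obstacle: the plug cannot be realised as a Lutz-type ($T^2$-invariant) form.} Lutz forms obey a systolic inequality on pieces like your shell.

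Write $\gamma=(h_1,h_2)$ and $\phi=\arg\gamma$, which is monotone. Then $\vol=\int|\gamma|^2\,|\rmd\phi|$. On a torus where the Reeb direction is the primitive class $(p,q)$, the closed orbits have period $(p,q)\cdot\gamma=|(p,q)|\,d$. Here $d$ is the distance from the origin to the tangent line of $\gamma$, i.e.\ the quantity $\sin\theta\cdot|(h_1,h_2)|$ from Section~\ref{section:systolic-Lutz}.

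Suppose all periods are at least $T$.
\begin{enumerate}[(a)]
\item Wherever $|\gamma|<T/H$, the Reeb direction must avoid every primitive class of norm at most $H$, since such a class would give period at most $H|\gamma|<T$.
\item Consecutive such classes form a lattice basis, so they are at angle less than $\arcsin(2/H)$. By continuity, on each component of $\{|\gamma|<T/H\}$ the curve is almost straight.
\item This is why your corners fail. A corner turning through an angle $\eta$ passes through a primitive direction of norm at most $2/\sin\eta$. That gives an orbit of period at most $2|\gamma|/\sin\eta$, which is short when the corner sits near the origin, i.e.\ exactly where you need to go to shrink the swept area.
\item The ends of the piece have $|\gamma|\geq a\geq T$. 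So each component of $\{|\gamma|<T/H\}$ is an almost straight chord of the disc of radius $T/H$. A chord reaching radius $\rho_{\min}$ subtends an angle close to $2\arccos(\rho_{\min}H/T)$ at the origin.
\item On a linear piece the total sweep $\Theta_0$ is less than $\pi$, since both endpoints lie on the line $h_1=a$. It cannot change without adding full twists.
\end{enumerate}
Fixing $H$ large in terms of $\Theta_0$, this forces $|\gamma|\geq c(\Theta_0)\,T$ everywhere, hence $\vol\geq c(\Theta_0)^2\,T^2\,\Theta_0$. So no curve design can make the volume small relative to $T^2$: the volume reduction must break the torus symmetry.

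\textbf{The missing idea is to keep the genuine ABHS plug and prove it is Bott integrable.} The plug is the mapping torus of a disc map $\varphi=\varphi^+\circ\varphi^-$ with very negative Calabi invariant. Bott integrability follows once you find a $\varphi$-invariant Morse function on the disc.
\begin{itemize}
\item $\varphi^+$ rotates concentric circles, rigidly by $2\pi/n$ on $\rho\D$.
\item $\varphi^-$ is $\Z_n$-equivariant and rotates concentric circles inside the beads $D_j$.
\item Hence a $\Z_n$-invariant function that is radial near $\partial\D$ and on each $D_j$ is $\varphi$-invariant. Such a function comes from a necklace foliation on $\D/\Z_n$.
\item This function gives a Morse--Bott integral on the plug. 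It extends across the rest of the transverse annulus with one extra index-$1$ critical point.
\end{itemize}

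\textbf{There is also a second, easier gap.} You modify only one region $N$, so $\vol(M)\geq\vol(M\setminus N,\alpha_0)$ stays bounded below. Shrinking the volume inside $N$ alone cannot make $\vol(M)$ small, contrary to your final step. As in Proposition~\ref{prop:beta-epsilon}, you must first turn all of $M$ outside a set of volume less than $\varepsilon$ into linear Lutz pieces. During this step $\Tmin$ is controlled by $C^1$-closeness of the piecewise linear approximation. Then fill each piece with plugs up to a fraction $\delta$ of its volume.
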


The strategy for the proof of this theorem is as
follows. As shown in \cite{ghs24}, Bott-integrable contact forms have a
simple description as so-called Lutz forms away from the singular level
sets of the Morse--Bott function. In Sections \ref{section:systolic-Lutz}
and \ref{section:linear-Lutz} we show that outside a set of
small volume these Lutz forms can be replaced by \emph{linear}
Lutz forms, while keeping control over~$\Tmin$. Into these
linear pieces we then insert plugs as constructed by
Abbondandolo, Bramham, Hryniewicz and Salom\~ao in
\cite{abhs18,abhs19}. These plugs are solid tori with a contact
form of small volume, standard near the boundary, and with
$\Tmin\geq 1$. By filling all but a small volume part of
the linear Lutz pieces with such plugs, the total volume can be made
arbitrarily small, whereas $\Tmin$ stays bounded from below.
In order to apply this construction in the Bott-integrable setting,
we need to construct an invariant Morse--Bott function
on the plug; this will be done in Section~\ref{section:plug-integrable}.
The estimates on volume and $\Tmin$ that establish Theorem~\ref{thm:main}
are given in Section~\ref{section:proof}.

A long-standing open question of Katok \cite{kato09} asks whether every
low-dimensional volume-preserving dynamical system with zero entropy
can be approximated by integrable systems.
A version of that question on surfaces was studied in
\cite{brkh23} and \cite{khan22}, where the authors show that there is a
gap between autonomous Hamiltonian diffeomorphisms and integrable
Hamiltonian diffeomorphisms in the $C^0$-topology
and the Hofer metric, respectively.
Theorem \ref{thm:main}, taken together with the results of
Abbondandolo, Benedetti and Edtmair \cite{abbe23,abe25} on Zoll
contact forms, implies
that there exist integrable contact forms
that are far from Zoll contact forms in the $C^2$-topology.
It would be interesting to understand whether there are
integrable contact forms far from $S^1$-invariant ones.
The same question can be asked with the $C^2$-topology replaced by
the Banach--Mazur pseudometric on contact forms
introduced in~\cite{abe25}, which may be regarded as an
analogue of the Hofer metric on Hamiltonian diffeomorphisms.
\section{The systolic geometry of Lutz forms}
\label{section:systolic-Lutz}
A \emph{Lutz form} is a contact form $\alpha$ on
$W:=[-1,1]\times T^2$ that can be written as
\[ \alpha=h_1(r)\,\rmd x_1+h_2(r)\,\rmd x_2,\]
where $r$ is the coordinate on $[-1,1]$, and $x_1,x_2$ are those on
$T^2=(\R/\Z)^2$. With the orientation of $W$ defined by the
volume form $\rmd r\wedge\rmd x_1\wedge\rmd x_2$, the contact condition
becomes
\[ \Delta_h:=\begin{vmatrix}h_1&h_1'\\ h_2&h_2'\end{vmatrix}<0,\]
since $\rmd\alpha=h_1'\,\rmd r\wedge\rmd x_1+h_2'\,\rmd r\wedge\rmd x_2$
and $\alpha\wedge\rmd\alpha=-\Delta_h\,\rmd r\wedge\rmd x_1\wedge\rmd x_2$.

By the Reeb--Liouville theorem \cite[Theorem~2.2]{ghs24},
every connected component of a regular level set of a Bott integral
is a torus, and the Bott-integrable contact form is a Lutz form in a
neighbourhood of this torus.

The Reeb vector field of a Lutz form is given by
\begin{equation}
\label{eqn:R-Lutz}
R=\frac{h_2'}{\Delta_h}\,\partial_{x_1}-\frac{h_1'}{\Delta_h}\,\partial_{x_2}.
\end{equation}
Moreover, we have
\[ \vol(W,\alpha)=-\int_{-1}^1\Delta_h\,\rmd r.\]
From these formulae it follows immediately that both $R$
(and hence $\Tmin$) and $\vol(W,\alpha)$ are invariant under
reparametrisations $[a,b]\ni s\mapsto r(s)\in[-1,1]$. Indeed, with
$k_i(s):=h_i(r(s))$, $i=1,2$, the map $s\mapsto r(s)$ induces
a strict contactomorphism between the Lutz forms
$k_1(s)\,\rmd x_1+k_2(s)\,\rmd x_2$ and $h_1(r)\,\rmd x_1+h_2(r)\,\rmd x_2$
on $[a,b]\times T^2$ and $[-1,1]\times T^2$, respectively.

Although we shall not rely on the following estimates in the present paper,
we record them for future reference. Write $\Tmin(r)$ for the
minimal period of $R$ on $\{r\}\times T^2$, in the case that
$h_2'(r)/h_1'(r)\in\Q\cup\{\infty\}$. When this ratio is
irrational, we set $\Tmin(r)=\infty$.
In the rational case, we have
\begin{eqnarray*}
\Tmin(r) & =    & \min\bigl\{t>0\co th_i'(r)/\Delta_h\in\Z,
                  \, i=1,2\bigr\}\\
         & \geq & \min\bigl\{|\Delta_h(r)/h_1'(r)|,
                  |\Delta_h(r)/h_2'(r)|\bigr\}.
\end{eqnarray*}
When both $h_1'(r)$ and $h_2'(r)$ are non-zero, a better estimate is
given by $\max$ in the last line, but the estimate with $\min$
also holds for one of the two being zero. For
$h_1'(r)=0$, say, we have $R=(1/h_1(r))\,\partial_{x_1}$, and the
minimal period is $\Tmin(r)=|h_1(r)|$.

The condition $\Delta_h<0$ means that the velocity vector $(h_1',h_2')$
of the plane curve $r\mapsto\bigl(h_1(r),h_2(r)\bigr)$ always points to
the right of the position vector $\bigl(h_1(r),h_2(r)\bigr)$. Thus,
we may regard
\[ \theta:=\angle\bigl((h_1',h_2'),(h_1,h_2)\bigr)\]
as a function of $r$ taking values in the interval $(0,\pi)$. Then
\[ \Delta_h=-\sin\theta\cdot\sqrt{h_1^2+h_2^2}\cdot
\sqrt{(h_1')^2+(h_2')^2}.\]
It follows that
\[ \left|\frac{\Delta_h(r)}{h_i'(r)}\right|\geq\sin\theta(r)\cdot
\sqrt{h_1^2(r)+h_2^2(r)}\]
and
\[ \Tmin(W,\alpha)\geq
\min_{r\in[-1,1]}\sin\theta(r)\cdot\bigl|\bigl(h_1(r),h_2(r)\bigr)\bigr|.\]
\section{Linearising the contact form}
\label{section:linear-Lutz}
Let $\alpha$ be a Bott-integrable contact form on a closed
$3$-manifold~$M$, with Bott integral $f\co M\rightarrow\R$.
The aim of this section is to show that one can homotope $\alpha$
to a \emph{linear} Lutz form
\[ a\,\rmd x_1-(br+c)\,\rmd x_2,\]
$a,b\in\R^+$, $c\in\R$, with Reeb vector field $a^{-1}\partial_{x_1}$,
on a collection of product neighbourhoods $[-1,1]\times T^2
\subset M$ whose complement is
a set of arbitrarily small volume with respect to
the new contact form.

\begin{prop}
\label{prop:beta-epsilon}
Given $(M,\alpha,f)$, for every $\varepsilon>0$ one can find a
Bott-integrable contact form $\beta=\beta_{\varepsilon}$ with
the following properties:
\begin{enumerate}[(i)]
\item $\beta$ is homotopic to $\alpha$ via Bott-integrable contact
forms, all sharing the Bott integral~$f$.
\item $\Tmin(M,\beta)>\Tmin(M,\alpha)-\varepsilon$
and $\vol(M,\beta)<\vol(M,\alpha)+\varepsilon$.
\item There is an open subset $U\subset M$ with
$\vol(U,\beta)<\varepsilon$ such that $M\setminus U$ is
the disjoint union of finitely many subsets $W_{\ell}\subset M$
diffeomorphic to $[-1,1]\times T^2$, with
\[ \beta|_{W_{\ell}}=a_{\ell}\,\rmd x_1-(b_{\ell}r+c_{\ell})\,\rmd x_2\]
and $f|_{W_{\ell}}=f(r)$ with $f'(r)\neq 0$.
Here $a_{\ell}\in\R^+$ is the minimal period
of the Reeb flow on~$W_{\ell}$, and $b_{\ell}\in\R^+$ is determined by
the identity $\vol(W_{\ell},\beta)=2a_{\ell}b_{\ell}$.
\end{enumerate}
\end{prop}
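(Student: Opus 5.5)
Here is the plan. The set of critical points of the Morse--Bott function $f$ is a finite union of critical submanifolds (circles, tori, Klein bottles). First choose a small closed neighbourhood $N$ of these critical sets, a union of finitely many critical levels thickened, with $\vol(N,\alpha)<\varepsilon/2$. The complement $M\setminus N$ is a finite disjoint union of regions $f^{-1}[t_0,t_1]\cap(\text{component})\cong[t_0,t_1]\times T^2$ with $f$ regular. By the Reeb--Liouville theorem, on each such region $\alpha=h_1\,\rmd x_1+h_2\,\rmd x_2$ is a Lutz form with $f=f(r)$, $f'\neq0$, after reparametrising by $r$. On $N$ we leave $\alpha$ unchanged. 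All modifications are made by changing $(h_1,h_2)$ in the interior of the Lutz pieces, keeping it fixed near the boundary. Then any modified form is still a Lutz form there, $f$ is still invariant (it depends on $r$ only), and the homotopy is through Bott-integrable forms with integral $f$. The Lutz-form homotopy is obtained by homotoping the curve $\gamma=(h_1,h_2)$ through curves satisfying $\Delta<0$, i.e.\ curves turning strictly clockwise relative to the origin (the set of such curves with fixed ends is convex in polar-type coordinates).

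On a Lutz piece, first perturb $\gamma$ slightly in $C^1$ so that its tangent direction $(h_1',h_2')$ has rational slope at finitely many chosen parameters. Near each of those, replace $\gamma$ by a curve that is a straight segment on a short subinterval. On a segment, after an $\SL(2,\Z)$ change of the torus coordinates making the segment direction $(0,-1)$ up to scaling, the form becomes $a\,\rmd x_1-(br+c)\,\rmd x_2$ with $a>0$ and $b>0$ (the sign is forced by $\Delta<0$). Its Reeb field is $a^{-1}\partial_{x_1}$, and its minimal period is $a$, since $x_1$ is a primitive circle. The volume of this piece is $\int_{-1}^1 ab\,\rmd r=2ab$, after reparametrising the segment to $[-1,1]$. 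The segments are concatenated with short transition arcs. To make the transition arcs (which form $U$, together with $N$) have small volume, use $\vol=-\int\Delta\,\rmd r$, which equals twice the area swept by $\gamma$ as seen from the origin. So the complement of the segments has small volume exactly when the polygon-like approximation sweeps almost all the area in its straight parts. Approximating $\gamma$ by an inscribed polygonal curve with rational-slope edges and rounded corners achieves this; the corners sweep area $O(\text{corner size})$, which is made arbitrarily small. The total swept area also changes by an arbitrarily small amount, giving $\vol(M,\beta)<\vol(M,\alpha)+\varepsilon$.

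The main obstacle is controlling $\Tmin$, part (ii). The worry is that tiny rounded corners pass through all slopes between two rational ones and may create short periodic orbits. On a Lutz level the period of an orbit in primitive class $(p,q)$ equals $\alpha(R)$ times the length in time, i.e.\ the action $|p h_1+q h_2|$ evaluated where $(h_1',h_2')\parallel$ the corresponding direction. More precisely, the action is $\langle\gamma,\nu\rangle$ for an integer covector $\nu$ orthogonal to $\gamma'$. So periods are governed by the support-function-like quantity $\langle\gamma(r),\nu\rangle$ with $\nu\in\Z^2$ normal to $\gamma'(r)$. Along the original curve, every rational tangent direction occurring gives a period at least $\Tmin(M,\alpha)$. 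I would try to show that if the polygonal approximation $\tilde\gamma$ is $C^0$-close to $\gamma$ and its tangent directions on each small piece lie between those of $\gamma$ on the same piece, then for each integer normal $\nu$ occurring along $\tilde\gamma$ the value $\langle\tilde\gamma,\nu\rangle$ is within $\varepsilon$ of a value $\langle\gamma(r'),\nu\rangle$ with $\gamma'(r')\perp\nu$. This needs care because $|\nu|$ can be large. However, such $\nu$ only increase the action, since $\langle\gamma,\nu\rangle$ scales with $|\nu|$ while the error is $|\nu|\cdot\|\tilde\gamma-\gamma\|$. The plan is therefore to handle large $|\nu|$ by the crude bound $\Tmin\ge\min\sin\theta\,|\gamma|$ times $|\nu|$-scaling from Section~\ref{section:systolic-Lutz}, and small $|\nu|$ (finitely many) by uniform closeness. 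Choosing the corners along a convex-in-angle interpolation makes the tangent-direction monotonicity automatic.
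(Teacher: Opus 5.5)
Your proposal is correct and follows essentially the paper's route: an inscribed polygonal approximation of the Lutz curve $(h_1,h_2)$ with rational-slope edges and smoothed corners, the volume controlled through $-\int\Delta_h\,\rmd r$, and $\Tmin$ controlled by matching each rational tangent direction of the approximation with the same tangent direction of the original curve at a nearby parameter. That matching is the paper's mean value/intermediate value argument, and your action formula $|\langle\gamma,\nu\rangle|$ with the large/small $|\nu|$ split is just another way of organising the paper's comparison of $R_\beta$ with $R_\alpha$ up to a factor close to~$1$; the remaining deviations (one global Lutz chart on each component of $M\setminus N$ instead of finitely many overlapping local Lutz pieces, and interpolation in polar rather than Cartesian coordinates) are harmless, although the global chart needs a short argument beyond the local Reeb--Liouville theorem as quoted in the paper.
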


\begin{proof}
The Morse--Bott function $f$ on the closed manifold $M$
has finitely many singular levels $c_1,\ldots,c_k\in\R$. For
$\delta>0$ sufficiently small, the union
\[ U_0:=\bigcup_{i=1}^k\bigl\{p\in M\co f(p)\in(c_i-\delta,
c_i+\delta)\bigr\}\]
of neighbourhoods of the singular level sets $f^{-1}(c_i)$ will
satisfy $\vol(U_0,\alpha)<\varepsilon/4$.

By the Reeb--Liouville theorem \cite[Theorem 2.2]{ghs24},
the complement $M\setminus U_0$ is foliated by tori, each of
which has a neighbourhood diffeomorphic to $[-1,1]\times T^2$,
where $f=f(r)$ with $f'(r)\neq 0$, and
$\alpha=h_1(r)\,\rmd x_1+h_2(r)\,\rmd x_2$.
Thanks to $M\setminus U_0$ being compact, we can choose finitely
many of these neighbourhoods whose interiors cover the complement of~$U_0$.
Moreover, it may be assumed that there are no triple intersections
between these Lutz pieces or with~$U_0$, and that the total volume of the
pairwise intersections is smaller than~$\varepsilon/4$.

Next we modify the contact form $\alpha$ on each of the finitely
many product neighbourhoods $[-1,1]\times T^2$ as follows.
In a small neighbourhood of $r=\pm 1$, containing the
intersection with $U_0$ or with another Lutz piece, we leave the plane curve
$r\mapsto\bigl(h_1(r),h_2(r)\bigr)$ unchanged. Outside these neighbourhoods,
which we may assume to have total volume smaller than~$\varepsilon/2$,
we choose a $C^1$-close approximation by a piecewise linear
curve $r\mapsto\bigl(g_1(r),g_2(r)\bigr)$ with vertices on the curve
$(h_1,h_2)$ and rational (or infinite) slope $g_2'/g_1'$ of the linear
pieces; the linear pieces need not be
parametrised linearly. The convex linear interpolation between
$(h_1,h_2)$ and $(g_1,g_2)$ defines a homotopy via contact forms
sharing the same Bott integral.

Finally, we smoothen the curve $(g_1,g_2)$ in small neighbourhoods of
the vertices of total volume (with respect to the
volume form $\beta\wedge\rmd\beta$ defined by the contact form
$\beta$ corresponding to the new curves $(g_1,g_2)$ on all
the Lutz pieces) smaller than~$\varepsilon/4$.
The last comment about convex linear interpolation still applies, so
condition (i) in the proposition is satisfied by the resulting
contact form~$\beta$.

With $\{W_{\ell}\}$ we denote
the collection of linear pieces, and with $U$ their complement, made up
of $U_0$, the smoothing regions, and the regions near the
boundary of the original Lutz pieces where we have left
the contact form $\alpha$ unchanged. By construction, we have
$\vol(U,\beta)<\varepsilon$.

For a sufficiently $C^1$-close approximation of the
curves $(g_1,g_2)$ describing the Lutz pieces,
the volume conditions in (ii) and (iii) will be satisfied by
the formula for $\vol(W,\alpha)$ from Section~\ref{section:systolic-Lutz}.

The finitely many Lutz pieces making up the complement of $U$
are now of the form
\[ [r_0,r_1]\times T^2,\;\;\;\beta=(b_1r+c_1)\,\rmd x_1+(b_2r+c_2)\,\rmd x_2,\]
with $b_2/b_1\in\Q\cup\{\infty\}$. By a linear
change of coordinates in $T^2$, using an element of $\SL(2,\Z)$,
and a reparametrisation of the $r$-coordinate, we can bring the Lutz
pieces into the form described in~(iii). The characterisations of
$a_{\ell},b_{\ell}$ in terms of minimal period and volume are
clear from the formulae in Section~\ref{section:systolic-Lutz}.

It remains to show that $\Tmin(M,\beta)$ is bounded from below by
$\Tmin(M,\alpha)-\varepsilon$ for a sufficiently $C^1$-close
approximation~$\beta$.
To this end, we need to relate the periodic orbits of $R_{\alpha}$ and
$R_{\beta}$ on a Lutz piece $[-1,1]\times T^2$, where
$\alpha=h_1(r)\,\rmd x_1+h_2(r)\,\rmd x_2$ and
$\beta=g_1(r)\,\rmd x_1+g_2(r)\,\rmd x_2$. By formula
\eqref{eqn:R-Lutz} from Section~\ref{section:systolic-Lutz} for the
Reeb vector field on a Lutz piece, the flow of $R_{\beta}$ is
periodic on $\{r\}\times T^2$ precisely when the slope
$g_2'(r)/g_1'(r)$ is rational or infinite. In particular,
this is the case on the linear pieces of~$(g_1,g_2)$.

By the choice of vertices of the piecewise linear approximation
on the curve $(h_1,h_2)$, and by the mean value theorem,
the slope $s_{\ell}=g_2'/g_1'$ of a linear piece
$[r_{\ell},r_{\ell+1}]$---before the
smoothing of~$(g_1,g_2)$---is realised by $h_2'(r_{\ell}^*)/h_1'(r_{\ell}^*)$
at some point $r_{\ell}^*\in(r_{\ell},r_{\ell+1})$;
see Figure~\ref{figure:PL-Lutz}. Then
\[ R_{\beta}=\frac{\Delta_h(r_{\ell}^*)}{\Delta_g(r_{\ell}^*)}\cdot
\frac{g_1'(r_{\ell}^*)}{h_1'(r_{\ell}^*)}\cdot R_{\alpha}\]
on $\{r_{\ell}^*\}\times T^2$ for $h_1'(r_{\ell}^*),g_1'(r_{\ell}^*)\neq 0$,
else we replace the second factor by the quotient
$g_2'(r_{\ell}^*)/h_2'(r_{\ell}^*)$.
The approximation being $C^1$-close implies that
$R_{\beta}$ and $R_{\alpha}$ (and hence the corresponding
minimal periods) differ by a factor close to~$1$.

In the smoothing regions of $(g_1,g_2)$, the slope $g_2'/g_1'$
varies between two rational slopes $s_{\ell}$ and $s_{\ell+1}$
of linear segments. By the intermediate value theorem, any rational
slope $s$ of $R_{\beta}$ between $s_{\ell}$ and $s_{\ell+1}$ in this region
equals $h_2'(r_s^{\ell})/h_1'(r_s^{\ell})$ for some
$r_s^{\ell}\in(r_{\ell}^*,r_{\ell+1}^*)$;
again, see Figure~\ref{figure:PL-Lutz}.
As before, the Reeb flows of $\beta$ and $\alpha$ differ by
a reparametrisation factor close to~$1$, provided we choose
the linear segments in $(g_1,g_2)$ sufficiently short. This proves the
estimate for $\Tmin$ in (ii) and hence the proposition.
\end{proof}

\begin{figure}[h]
\labellist
\small\hair 2pt
\pinlabel $(h_1,h_2)$ [t] at 9 335
\pinlabel $(g_1,g_2)$ [tr] at 171 286
\pinlabel $\text{slope $s_{\ell}$}$ [l] at 327 268
\pinlabel $\text{slope $s_{\ell+1}$}$ [r] at 278 196
\pinlabel $\text{slope $s$}$ [l] at 376 157
\endlabellist
\centering
\includegraphics[scale=0.5]{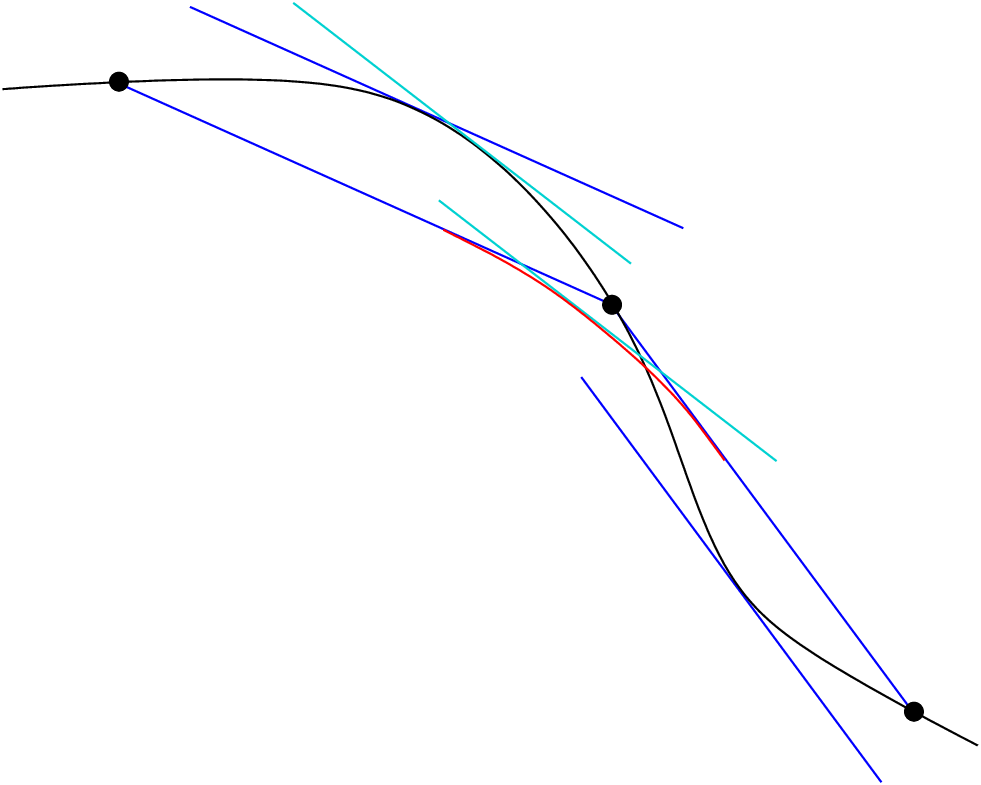}
  \caption{The smoothened piecewise linear Lutz form.}
  \label{figure:PL-Lutz}
\end{figure}

\section{Integrability of the ABHS plug}
\label{section:plug-integrable}
The following proposition, due to Abbondandolo, Bramham, Hryniewicz and
Salom\~ao (ABHS), lists the essential properties of their plug;
see \cite[Proposition 3.1]{abhs19}. The details of the
construction are contained in \cite[Proposition 2.27]{abhs18}
and \cite[Proposition 3.1]{abhs18}. We write $x,y$ for the Cartesian
coordinates and $r$ for the
radial coordinate on the closed unit disc~$\D$, and $s$ for the
angular coordinate on the circle $\R/\Z$.

\begin{prop}[ABHS]
\label{prop:abhs}
Let $r_0,\delta>0$ and a primitive $\lambda$ of the standard area
form $\rmd x\wedge\rmd y$ on $r_0\D$ be given. Then there is
a contact form $\beta=\beta_{r_0,\lambda,\delta}$ on
the solid torus $r_0\D\times\R/\Z$
with the following properties:
\begin{enumerate}[(i)]
\item $\beta=\lambda+\rmd s$ near  the boundary of the solid torus;
in particular, the Reeb vector
field equals $\partial_s$ near the boundary.
\item $\beta$ is isotopic relative to a neighbourhood of the boundary
and via contact forms to $\lambda+\rmd s$.
\item The minimal period of the Reeb vector field is at least~$1$.
\item The volume $\vol(r_0\D\times\R/\Z,\beta)$ is smaller than~$\delta$.
\end{enumerate}
\end{prop}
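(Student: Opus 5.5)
This proposition is quoted from \cite{abhs18,abhs19}, so the plan is to recall the structure of the ABHS construction rather than invent a new one. We work in the class of contact forms $\lambda+H\,\rmd s+\dots$ on the solid torus that come from a Hamiltonian disc map. A contact form on $r_0\D\times\R/\Z$ transverse to the disc fibres, with Reeb field positively transverse to every disc $r_0\D\times\{s\}$, is determined up to isotopy by its first return map $\phi$ on the disc together with a return time function $\tau$. Conversely, any area-preserving map $\phi$ of $r_0\D$ that is the identity near the boundary and is compactly supported Hamiltonian can be realised. We choose a generating family with action $\sigma$ such that $\phi^*\lambda-\lambda=\rmd\sigma$. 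Then a positive function $\tau=\sigma+\text{const}$ can be prescribed as return time, and the resulting form is a mapping torus contact form.

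The key formula is that the volume equals $\int_{r_0\D}\tau\,\rmd x\wedge\rmd y$. Moreover, the periods of closed Reeb orbits are the sums of $\tau$ along periodic orbits of~$\phi$. Since $\lambda+\rmd s$ corresponds to $\phi=\mathrm{id}$ and $\tau\equiv 1$, the task reduces to the following. We must find a compactly supported Hamiltonian diffeomorphism $\phi$ of $r_0\D$ and a return time $\tau>0$, equal to~$1$ near the boundary, that satisfy three conditions:
\begin{enumerate}[(a)]
\item $\int\tau<\delta$;
\item every $k$-periodic orbit of $\phi$ has total return time at least~$1$;
\item the data can be connected to $(\mathrm{id},1)$ through admissible pairs.
\end{enumerate}
Condition (c) gives (ii), since the family of mapping torus forms varies through contact forms.

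The construction of $\phi$ proceeds in the following order. First, we use a map that rotates the interior by a large angle while the action $\sigma$ drops sharply. Second, we compose with a mixing or \emph{ergodic-in-the-middle} twist map on annuli, so that $\tau$ is tiny on most of the area and large only on a thin region. Third, we arrange that every periodic orbit spends enough time in the region where $\tau$ is large. ABHS achieve the third step by making fixed points and periodic orbits only possible where $\tau\ge 1$, or by letting every periodic orbit of period $k$ have accumulated action at least~$1$. They control this through the Calabi invariant: the average of $\sigma$ is the Calabi invariant, which can be made very negative while actions at periodic points stay bounded below.

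The main obstacle is step~(b): bounding the actions of \emph{all} periodic points of the return map from below while the average of $\tau$ is small. I would follow ABHS and take a composition of radial twist maps whose periodic orbits can be listed explicitly, since twist maps have circle families of periodic points with computable action. Then I would check the sum of $\tau$ along each orbit by direct integration of the twist profile. Finally, I would perturb to ensure nondegeneracy is not needed. The volume bound and the isotopy then follow from the formula $\vol=\int\tau$ and the convexity of the space of admissible return-time functions.
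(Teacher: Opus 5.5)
\textbf{There is a genuine gap: the disc map satisfying your (a) and (b) is never constructed, and the one concrete route you name provably cannot produce it.}

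Your reduction is correct. It is the framework the paper recalls in Section~4: the plug is the mapping torus of a compactly supported area-preserving map $\varphi$ of $r_0\D$ with return time $\tau=1+\sigma$. The volume is then $\int\tau\,\rmd x\wedge\rmd y$, and the periods are orbit sums of $\tau$. But the whole content of the proposition is the existence of that map.

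Your route, twist maps whose periodic circles and actions can be listed explicitly, fails when the twists share a centre, since a composition of concentric twists is again a radial twist. To see this:
\begin{enumerate}[(1)]
\item For a radial twist, let $\nu(a)$ be the rotation number on the circle enclosing area $a$. For the standard primitive, $\sigma'(a)=a\,\nu'(a)$.
\item The centre is a fixed point, so (iii) forces $\sigma(0)\ge 0$.
\item Assume $\pi r_0^2\ge 2\delta$. Then $\int\tau<\delta$ gives some $a_1<2\delta$ with $\sigma(a_1)<-1/2$.
\item Let $a_2\le a_1$ be the last point with $\sigma(a_2)\ge 0$. On $(a_2,a_1]$, $\nu$ cannot take an integer value, since that would be a circle of fixed points with $\tau<1$. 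Hence $\nu-m\in[0,1]$ there for some $m\in\Z$.
\item Integrating $\sigma'=a\,(\nu-m)'$ by parts gives $|\sigma(a_1)-\sigma(a_2)|\le a_1<2\delta$. This is impossible for $\delta\le 1/4$.
\end{enumerate}
Non-concentric twists escape this argument, but then periodic orbits are no longer explicitly listable without further structure. Your heuristics (a large rotation, an ergodic twist, orbits forced to linger where $\tau$ is large) do not supply that structure. Control of the Calabi invariant is just a restatement of (a).

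The missing idea is the $\Z_n$-symmetry of the ABHS map, which the paper sketches in Section~4. One takes $\varphi=\varphi^+\circ\varphi^-$:
\begin{enumerate}[(1)]
\item $\varphi^+$ rotates a large disc $\rho\D$ rigidly by the \emph{small} angle $2\pi/n$, not a large one;
\item $\varphi^-$ is the $\Z_n$-equivariant extension of negative twists $\varphi_j$ supported in small discs $D_j$ lying in one sector of angle $2\pi/n$.
\end{enumerate}
Then $\varphi$ carries each sector of $\rho\D$ onto the next. So every periodic point in $\rho\D\setminus\{0\}$ has period divisible by $n$. Moreover $\varphi^n=\varphi_j^n$ on $D_j$, which reduces all orbit sums of $\tau$ there to twist computations on a single disc. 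In the twist annulus of $\varphi^+$ the rotation numbers lie in $[0,1/n]$.

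Hence every periodic orbit other than the centre and the fixed circles near the boundary has at least $n$ points. A return time of order $1/n$ on most of the disc is then compatible with $\Tmin\ge 1$, and this is what makes the volume small. Periodic orbits are long; they are not confined to where $\tau$ is large.

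Separately, (ii) does not follow from convexity in $\tau$ for fixed $\varphi$. Deforming $\varphi$ to the identity while keeping $1+\sigma_t>0$ needs its own argument.
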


We want to prove the following addendum to this proposition.

\begin{add}
\label{add:abhs}
The contact form $\beta$ in the preceding proposition admits a Bott integral
$f$ whose level sets near the boundary of the solid torus
are regular tori $\{r=\mbox{\rm const.}\}$, that is, $f=f(r)$
and $f'(r)>0$ for $r$ close to~$r_0$.
\end{add}

The key to Proposition~\ref{prop:abhs} is the construction
of an area-preserving diffeomorphism $\varphi$ of $r_0\D$, equal to
the identity near
the boundary, with control over the compactly supported
primitive of the closed $1$-form $\varphi^*\lambda-\lambda$,
the periodic points and
the Calabi invariant; see~\cite[Proposition 3.2]{abhs19}.
Choose a \emph{positive}
primitive $\tau\co r_0\D\rightarrow\R^+$ of $\varphi^*\lambda-\lambda$,
constant near the boundary. Then the contact form $\lambda+\rmd s$
on $r_0\D\times\R$ with Reeb vector field $\partial_s$ is
invariant under the diffeomorphism
\[ \begin{array}{rccc}
\Phi\co & r_0\D\times\R & \longrightarrow & r_0\D\times\R\\
        & (z,s)         & \longmapsto     & \bigl(\varphi(z),s-\tau(z)\bigr),
\end{array} \]
and hence descends to a contact form on
$(r_0\D\times\R)/\langle\Phi\rangle$.

In order to prove Addendum~\ref{add:abhs}, we simply need to exhibit
a Morse function $h\co r_0\D\rightarrow\R$ (or the corresponding
level sets) invariant under~$\varphi$, and with regular levels
$\{r=\mbox{\rm const.}\}$ in a neighbourhood of the boundary of the disc.
This Morse function then gives rise to a Bott integral, first on the
mapping torus $(r_0\D\times\R)/\langle\Phi\rangle$,
and then on its diffeomorphic copy $r_0\D\times\R/\Z$,
with the properties claimed in the addendum.

We now describe the relevant properties of~$\varphi$; details
of the construction are given in \cite[pp.\ 734--736]{abhs18}.
For ease of notation, we assume $r_0=1$; the general case follows
by an appropriate rescaling as discussed after \cite[Proposition 3.2]{abhs19}.
The area-preserving diffeomorphism $\varphi$ is a composition
$\varphi=\varphi^+\circ\varphi^-$ of two Hamiltonian diffeomorphisms.

The construction of $\varphi^{\pm}$ depends on a parameter~$n\in\N$;
when chosen sufficiently large, this yields the required estimates in
Proposition~\ref{prop:abhs}. The properties of the diffeomorphism
$\varphi^+$ of $\D$ are:
\begin{enumerate}[(i)]
\item Each circle $\{r=\mbox{\rm const.}\}$ is rotated in \emph{positive}
direction through an angle at most $2\pi/n$.
\item On a sufficiently large disc $\rho\D\subset\D$, the rotation angle
equals $2\pi/n$.
\item Near the boundary of~$\D$, the map $\varphi^+$ is the identity.
\end{enumerate}

Next one chooses a finite collection of pairwise disjoint closed discs
$D_j$, $j\in J$, in the sector
\[ S:=\bigl\{z\in\D\co |z|<\rho,\, \arg(z)\in (0,2\pi/n)\bigr\},\]
with the total area of the $D_j$ close to $\pi\rho^2/n$.
For each $j\in J$ one defines a Hamiltonian diffeomorphism~$\varphi_j$,
compactly supported in the interior of~$D_j$ and acting
on $D_j$ analogously to the action of $\varphi^+$ on~$\D$, but
rotating in \emph{negative} direction. Then
extend this diffeomorphism of $S$ to a diffeomorphism $\varphi^-$
of $\D$ equivariant with respect to the
$\Z_n$-action generated by the rigid rotation of
$\D$ through an angle $2\pi/n$. We may think of the
disc $\D/\Z_n$ as being obtained by gluing the radial boundaries
of the sector $\{\arg(z)\in(0,2\pi/n)\}\subset\D$,
so we regard the $D_j$ as discs in $\D/\Z_n$.

The relevant properties of the Morse function $h\co\D\rightarrow\R$ are
encoded in the Morse foliation of level sets of~$h$, which we now describe.
Away from a small disc around the centre $0\in\D/\Z_n$
disjoint from the $D_j$,
the branched covering $\D\rightarrow\D/\Z_n$ restricts to
a covering $\wtA\rightarrow A_n$ of annuli. Join the `beads' $D_j$ in
$A_n$ into a necklace as shown in Figure~\ref{figure:morse-foliationD}.
This gives rise to a Morse foliation with a critical point of
Morse index~$0$ at the centre of each~$D_j$, and a critical point of
index~$1$ on each connecting piece of the necklace. The corresponding
Morse function on $A_n$ extends to a Morse function
on the disc $\D/\Z_n$, radially symmetric near
the centre, and with an index~$2$ critical point at the centre.
This function lifts to the desired Morse function $h$ on~$\D$.

\begin{figure}[h]
\labellist
\small\hair 2pt
\pinlabel $\theta=0$ [l] at 513 151
\pinlabel $\partial(\D/\Z_n)$ [l] at 508 289
\pinlabel $\theta=2\pi/n$ [r] at 0 151
\endlabellist
\centering
\includegraphics[scale=0.52]{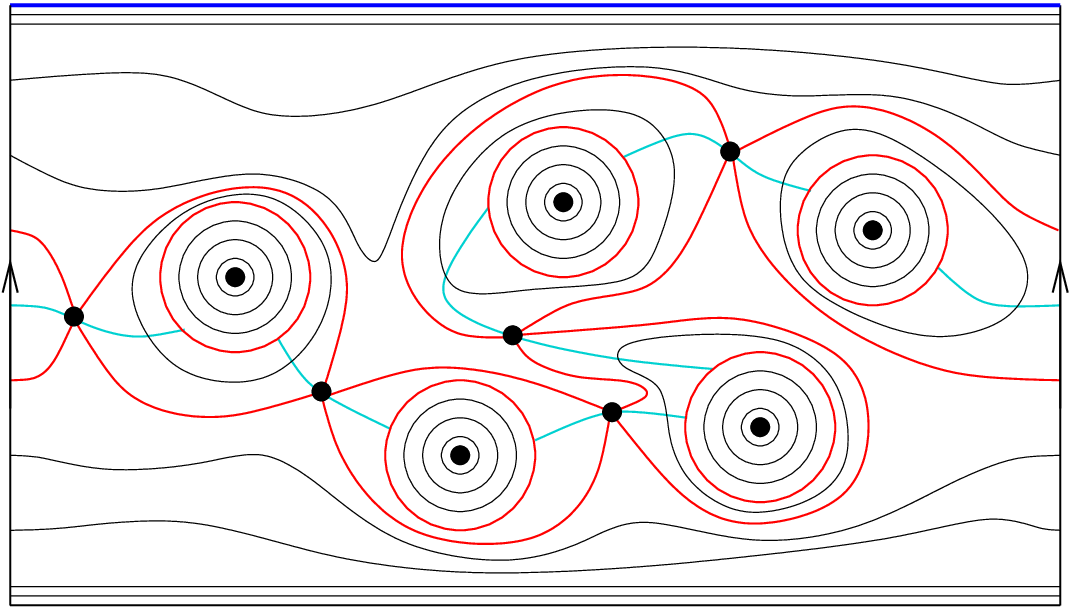}
  \caption{The Morse foliation on the annulus $A_n\subset\D/\Z_n$.}
  \label{figure:morse-foliationD}
\end{figure}
\section{Proof of Theorem~\ref{thm:main}}
\label{section:proof}
Starting from a Bott-integrable contact form $\alpha$ on~$M$,
let $\beta_{\varepsilon}$ be the contact form constructed in
Proposition~\ref{prop:beta-epsilon}.
We consider a linear Lutz piece $W_{\ell}\cong T^2\times[-1,1]$ with
\[ \beta_{\varepsilon}|_{W_{\ell}}=
a_{\ell}\,\rmd x_1-(b_{\ell}r+c_{\ell})\,\rmd x_2,\]
$\Tmin(W_{\ell},\beta_{\varepsilon})=a_{\ell}$, and
$\vol(W_{\ell},\beta_{\varepsilon})=
2a_{\ell}b_{\ell}$. We consider the annulus
\[ A_{\ell}:=\{*\}\times (\R/\Z)_{x_2}\times[-1,1]\subset T^2\times[-1,1]\cong
W_{\ell}\]
with area form $b_{\ell}\, \rmd x_2\wedge\rmd r$. Given $\delta>0$
(and smaller than~$1$), by a simple application of
Moser's method~\cite{mose65} we can choose an area-preserving
embedding
\[ \psi_{\ell}\co (r_{\ell}\D,\rmd x\wedge\rmd y)\longrightarrow
(A_{\ell},b_{\ell}\, \rmd x_2\wedge\rmd r)\]
with
\[ \pi r_{\ell}^2=(1-\delta)\area(A_{\ell}).\]
This extends to an embedding
\[ \begin{array}{rccc}
\Psi_{\ell}\co & r_{\ell}\D\times\R/\Z & \longrightarrow &
    A_{\ell}\times(\R/\Z)_{x_1}=W_{\ell}\\
               & \bigl((x,y),s\bigr)   & \longmapsto     &
    \bigl(\psi_{\ell}(x,y),s\bigr)
\end{array}\]
satisfying
\[ \Psi_{\ell}^*\bigl(a_{\ell}\,\rmd x_1-(b_{\ell}r+c_{\ell})\,\rmd x_2\bigr)
=a_{\ell}\,\rmd s+\lambda_{\ell}\]
with $\lambda_{\ell}:=\psi_{\ell}^*(-(b_{\ell}r+c_{\ell})\,\rmd x_2)$
a primitive of $\rmd x\wedge\rmd y$.

Let $\beta_{\ell}=\beta_{r_{\ell},\lambda_{\ell},\delta}$
be the contact form on $r_{\ell}\D\times\R/\Z$
provided by Proposition~\ref{prop:abhs}. Notice that because of
the factor $a_{\ell}$ in front of $\rmd s$, the minimal period
of the Reeb vector field will be bounded below by~$a_{\ell}$ rather
than~$1$.

Now we replace the contact form $\beta_{\varepsilon}$ on the
image of $\Psi_{\ell}$ by $(\Psi_{\ell})_*\beta_{\ell}$.
The image under $\Psi_{\ell}$ of the Morse--Bott foliation
on $r_{\ell}\D\times\R/\Z$ constructed in the preceding section,
near the image of the boundary $\partial(r_{\ell}\D)\times\R/\Z$, is the
product of a Morse foliation by circles in $A_{\ell}$ with
the $(\R/\Z)$-factor corresponding to the $x_1$-coordinate.
So the Morse foliation on $A_{\ell}$ shown in
Figure~\ref{figure:morse-foliationA}, with one additional critical point
of index~$1$, defines an extension
to a Morse--Bott foliation on~$W_{\ell}$ for the new contact
form $\beta_{\varepsilon,\delta}$ on~$M$.
The Morse function can be chosen in such a way that the new Morse--Bott
function on $W_{\ell}$ coincides with the original $f|_{W_{\ell}}=f(r)$
near $\{r=\pm 1\}$.

\begin{figure}[h]
\labellist
\small\hair 2pt
\pinlabel $\psi_{\ell}(r_{\ell}\D)$ at 250 205
\pinlabel $r=-1$ [l] at 509 3
\pinlabel $r=1$ [l] at 509 362
\endlabellist
\centering
\includegraphics[scale=0.45]{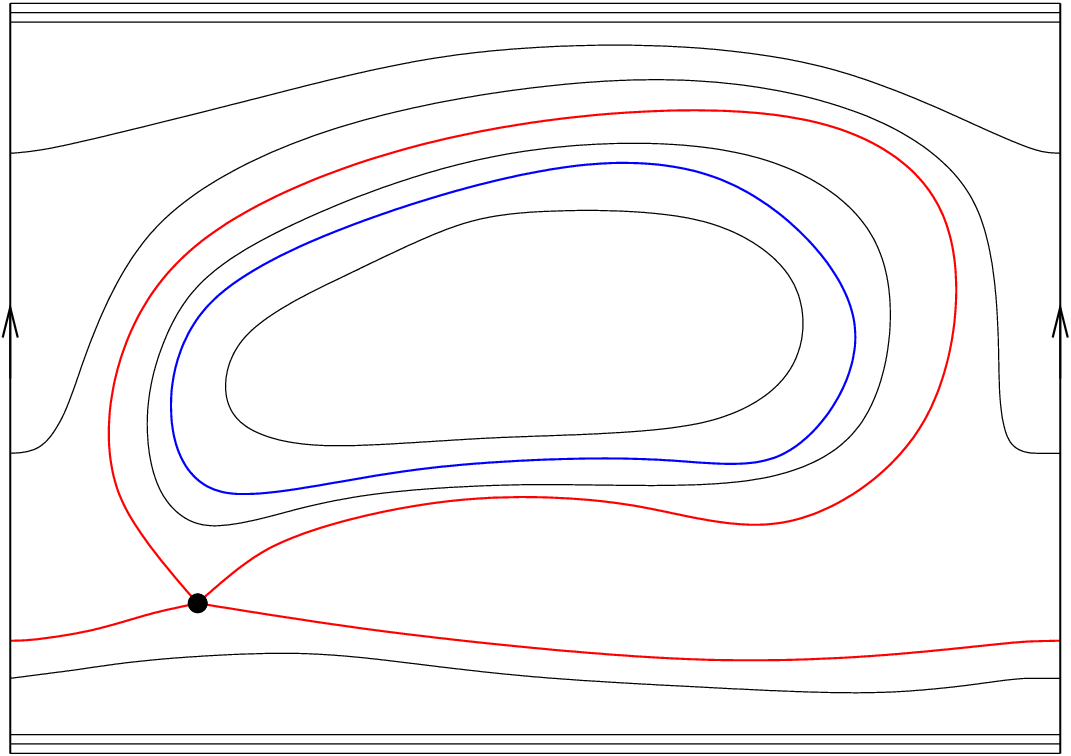}
  \caption{The Morse foliation on the annulus
           $A_{\ell}=\R/\Z\times [-1,1]$.}
  \label{figure:morse-foliationA}
\end{figure}

By construction, we have
\[ \Tmin(W_{\ell},\beta_{\varepsilon,\delta})\geq a_{\ell}
=\Tmin(W_{\ell},\beta_{\varepsilon}),\]
and hence
\[ \Tmin(M,\beta_{\varepsilon,\delta})\geq\Tmin(M,\beta_{\varepsilon})>
\Tmin(M,\alpha)-\varepsilon.\]
For the volume, we estimate
\begin{eqnarray*}
\vol (M,\beta_{\varepsilon,\delta})
 & = & \vol (U,\beta_{\varepsilon})+\sum_{\ell}\vol(W_{\ell},
       \beta_{\varepsilon,\delta})\\
 & < & \varepsilon+\sum_{\ell}(2a_{\ell}b_{\ell}+1)\delta.
\end{eqnarray*} 
Here the summand $\delta$ corresponds to the volume of
the image of $\Psi_{\ell}$ after the modification of
$\beta_{\varepsilon}$ into $\beta_{\varepsilon,\delta}$
and comes from part (iv) of Proposition~\ref{prop:abhs};
the summand $2a_{\ell}b_{\ell}\delta$ is the volume of the complement
of this image, where $\beta_{\varepsilon}$ is left unmodified.

The number of summands (i.e., the range of~$\ell$) and
the $a_{\ell},b_{\ell}$ depend on the chosen $\varepsilon>0$.
By then choosing $\delta>0$ sufficiently small, we can ensure that
$\vol(M,\beta_{\varepsilon,\delta})<2\varepsilon$. With
$\alpha_{\varepsilon}:=\beta_{\varepsilon,\beta}$ we then have the estimate
\[ \frac{\Tmin(M,\alpha_{\varepsilon})^2}{\vol(M,\alpha_{\varepsilon})}
>\frac{(\Tmin(M,\alpha)-\varepsilon)^2}{2\varepsilon}.\]
Since $\varepsilon>0$ can be chosen as small as we like,
Theorem~\ref{thm:main} is proved.
\begin{ack}
J.~H.\ is supported by the Radboud Excellence Initiative.
M.~S.\ is supported by the Deutsche Forschungsgemeinschaft on the project
``Reeb flows with symmetries: dynamics and topology''
(SA 5534/1-1, Projektnummer 568330991).
\end{ack}

\end{document}